\newtheorem{thm}{Theorem}[section]
\newtheorem{cor}[thm]{Corollary}
\newtheorem{lem}[thm]{Lemma}
\newtheorem{prop}[thm]{Proposition}
\theoremstyle{definition}
\theoremstyle{remark}
\newtheorem{ex}[thm]{Example}
\newcommand{\norm}[1]{\left\Vert#1\right\Vert}
\newcommand{\set}[1]{\left\{#1\right\}}
\newcommand{\Pm}{\mathrm{Prim}}
\newcommand{\mset}{\emptyset}
\begin{document}
\baselineskip=18pt
\title{COMPACT SUBSETS OF THE GLIMM SPACE OF A $C^*$-ALGEBRA}
\author{Aldo J. Lazar}
\address{School of Mathematical Sciences\\
         Tel Aviv University\\
         Tel Aviv 69978, Israel}
\email{aldo@post.tau.ac.il}

\thanks{}%
\subjclass{46L05}
\keywords{primitive ideal space, complete regularization}
\date{March 15, 2012}

%\commby{}%
% --------------------------------------------------------------
\begin{abstract}

If $A$ is a $\sigma$-unital $C^*$-algebra and $a$ is a strictly positive element of $A$ then for every compact subset of the complete
regularization $\mathrm{Glimm}(A)$ of $\Pm(A)$ there exists $\alpha > 0$ such that $K\subset \set{G\in \mathrm{Glimm}(A) \mid \norm{a + G}\geq
\alpha}$. This extends the conclusion of \cite[Theorem 2.5]{Da} to all $\sigma$-unital $C^*$-algebras. However, there is a $C^*$-algebra $A$ and a
compact subset of $\mathrm{Glimm}(A)$ that is not included in any set of the form $\set{G\in \mathrm{Glimm}(A) \mid \norm{a + G}\geq \alpha}$.
\end{abstract}

\maketitle
% ------------------------------------------------------------
\section{Introduction}

The lack of good separation properties on the primitive ideal space of a $C^*$-algebra is a serious obstacle in obtaining useful non-commutative
versions of the Gelfand-Naimark theorem for the commutative algebras. One way to circumvent this impediment is to pass to the complete
regularization of the primitive ideal space. A method to obtain the complete regularization of $\Pm(A)$ for a $C^*$-algebra $A$ was exposed in
\cite[III, \S]{DH} and we shall briefly review it here. Two primitive ideals $P_1$ and $P_2$ are equivalent, $P_1 \approx P_2$, if $f(P_1) =
f(P_2)$ for every bounded continuous real function $f$ on $\Pm(A)$. Each equivalence class is a hull-kernel closed subset of $\Pm(A)$ and one
associates to it its kernel. The family of ideals thus obtained was denoted in \cite[p. 351]{AS} by $\mathrm{Glimm}(A)$ and the quotient space
$\Pm(A)/\approx$ was naturally identified with it. The quotient map $q_A$ takes a primitive ideal $P$ to the kernel of its equivalence class.
Two topologies that can be put on $\mathrm{Glimm}(A)$ are of interest to us. One is the quotient topology $\tau_q$, the other is the weakest
topology for which the functions on $\mathrm{Glimm}(A)$ defined by dropping to this space the bounded real continuous functions on $\Pm(A)$ are
continuous. The latter topology is completely regular and is denoted $\tau_{cr}$. Obviously $\tau_q\geq \tau_{cr}$ and cases when equality
occurs or does not were discussed in \cite{AS} and \cite{L}. Ways to represent $A$ as an algebra of continuous fields with base space
$(\mathrm{Glimm}(A),\tau_{cr})$ were discussed in \cite{DH} and \cite{AS}. Other uses of $\mathrm{Glimm}(A)$ can be found in \cite{EW}.

The continuous fields considered have to vanish at infinity and the one point compactification $\mathrm{Glimm}(A)\cup \{\infty\}$ considered in
\cite{DH} was defined by using the complements in $\mathrm{Glimm}(A)\cup \{\infty\}$ of the $\tau_{cr}$-compact sets $\set{G\in
\mathrm{Glimm}(A) \mid \norm{a + G}\geq \alpha}$, $a\in A$ and $\alpha > 0$, as basic neighbourhoods of the point at infinity. Of course, one
can use  the family of all the $\tau_{cr}$-compact subsets of $\mathrm{Glimm}(A)$ to get the finest one point compactification. The stated
objective of \cite{Da} was to show that if $A$ is quasicentral that is, no primitive ideal of $A$ contains the center of $A$, then each
$\tau_{cr}$-compact subset of $\mathrm{Glimm}(A)$ is included in a set of the form $\set{G\in \mathrm{Glimm}(A) \mid \norm{a + G}\geq \alpha}$
for some $a\in A$ and $\alpha
> o$ meaning that the two compactifications are the same, see Theorem 2.5 there. In \cite[p. 351]{AS} this result was improved by showing that $a$ can be taken in the
center of $A$.

Our aim is to show that one can cover the $\tau_{cr}$-compact subsets of the Glimm space by sets determined by norm inequalities as in
\cite[Theorem 2.5]{Da} in other two situations: when the $C^*$-algebra has a countable approximate identity or when the quotient map on the
Glimm space is open. We also show that there are situations when the $\tau_{cr}$-compact sets cannot be covered in this manner.

We shall use the well known fact that a $C^*$-algebra with a countable approximate identity (often called a $\sigma$-unital $C^*$-algebra) has a
strictly positive element $a$ that is, an element such that $aAa$ is dense in $A$. We shall freely make use of the following two equalities for
$x$ an element of the $C^*$-algebra $A$ and $\alpha > 0$:
\begin{multline*}
 q_A(\set{P\in \Pm(A) \mid \norm{x + P} > \alpha}) = \set{G\in \mathrm{Glimm}(A) \mid \norm{x + G} > \alpha},\\
 q_A(\set{P\in \Pm(A) \mid \norm{x + P}\geq \alpha}) = \set{G\in \mathrm{Glimm}(A) \mid \norm{x + G}\geq \alpha}.
\end{multline*}
They follow immediately from these facts:
\begin{enumerate}
   \item $q_A(P)\subset P$ for every $P\in \Pm(A)$
   \item for each $G\in \mathrm{Glimm}(A)$ and $x\in A$ there is $P\in \mathrm{hull}(G)$ such that $\norm{x + P} = \norm{x + G}$.
\end{enumerate}

\section{Results}

\begin{thm} \label{T:T}

   Let $A$ be a $C^*$-algebra with a countable approximate identity and $a\in A$ a strictly positive element. Then for every $\tau_{rc}$-compact subset
   $K$ of $\mathrm{Glimm}(A)$ there exists $\alpha > 0$ such that $K\subset \set{G\in \mathrm{Glimm}(A) \mid \norm{a + G}\geq \alpha}$.

\end{thm}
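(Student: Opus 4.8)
The plan is to cover $K$ first by \emph{finitely many} norm sets coming from arbitrary elements of $A$, and then to absorb each of these into a single norm set determined by the strictly positive element $a$; the absorption is the step where strict positivity is decisive.

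For the first step I would use the one point compactification $\mathrm{Glimm}(A)\cup\{\infty\}$ of \cite{DH}. It is compact and Hausdorff, it induces $\tau_{cr}$ on $\mathrm{Glimm}(A)$, and a neighbourhood base at $\infty$ is given by the complements of the sets $\set{G : \norm{b+G}\geq \beta}$ with $b\in A$, $\beta>0$, together with their finite unions. Since $K$ is $\tau_{cr}$-compact it is compact, hence closed, in this compactification, and $\infty\notin K$. Choosing a basic neighbourhood of $\infty$ disjoint from the compact set $K$ then produces finitely many $b_1,\dots,b_k\in A$ and $\beta_1,\dots,\beta_k>0$ with $K\subset\bigcup_{i=1}^k\set{G : \norm{b_i+G}\geq \beta_i}$.

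The decisive step is to show that each $\set{G : \norm{b_i+G}\geq \beta_i}$ is contained in a set of the form $\set{G : \norm{a+G}\geq \gamma_i}$, and here I would exploit that $aAa$ is dense in $A$. Choose $x_i\in A$ with $\norm{b_i - a x_i a}<\beta_i/2$. Passing to the quotient $A/G$ and using that the quotient map is contractive gives, for every $G$, the estimate $\norm{b_i+G}\leq \norm{b_i - a x_i a}+\norm{a+G}\,\norm{x_i}\,\norm{a+G}<\beta_i/2+\norm{x_i}\,\norm{a+G}^2$. Consequently, if $\norm{b_i+G}\geq\beta_i$ then $\norm{a+G}^2\geq\beta_i/(2\norm{x_i})$, so with $\gamma_i:=\sqrt{\beta_i/(2\norm{x_i})}>0$ one gets $\set{G : \norm{b_i+G}\geq \beta_i}\subset\set{G : \norm{a+G}\geq \gamma_i}$ (the case $x_i=0$ being vacuous). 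Setting $\alpha:=\min_i\gamma_i$ and noting that $\set{G:\norm{a+G}\geq\gamma}$ enlarges as $\gamma$ decreases, I conclude $K\subset\bigcup_{i=1}^k\set{G:\norm{a+G}\geq\gamma_i}=\set{G:\norm{a+G}\geq\alpha}$.

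I expect the absorption step to be the main obstacle, and it is exactly where the hypothesis is indispensable. There is no reason for $G\mapsto\norm{a+G}$ to be lower semicontinuous on $(\mathrm{Glimm}(A),\tau_{cr})$, so one cannot simply argue that the single family $\set{G:\norm{a+G}\geq\alpha}$ already constitutes a neighbourhood base at $\infty$ and invoke compactness directly. The quadratic domination of $\norm{b+G}$ by $\norm{a+G}^2$, furnished by the density of $aAa$, is precisely what permits one strictly positive element to replace the finitely many $b_i$; in the absence of a strictly positive element this collapse is unavailable, in agreement with the counterexample announced for general $C^*$-algebras.
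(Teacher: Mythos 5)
Your absorption step is correct and is a pleasant use of strict positivity: from $\norm{b_i - ax_ia} < \beta_i/2$ one does get $\norm{b_i+G}\leq \beta_i/2 + \norm{x_i}\norm{a+G}^2$ for every $G$, hence $\set{G \mid \norm{b_i+G}\geq\beta_i}\subset\set{G \mid \norm{a+G}\geq\gamma_i}$. The genuine gap is in your first step, and it is not a technicality: the assertion that $\mathrm{Glimm}(A)\cup\{\infty\}$ with the Dauns--Hofmann neighbourhoods of $\infty$ is a compact \emph{Hausdorff} space inducing $\tau_{cr}$ is essentially the statement to be proved. To know that the compact set $K$ is closed in that space, equivalently that some basic neighbourhood of $\infty$ misses $K$, you must already know that $K$ lies in a finite union of sets $\set{G \mid \norm{b+G}\geq\beta}$ --- and a finite union of such sets is contained in a single one (replace each $b_i$ by $b_i^*b_i$ and sum), so this is exactly the conclusion of the theorem. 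Your first step invokes no hypothesis on $A$ at all, and the paper's last example (the Tychonoff-plank algebra from Somerset's appendix to \cite{L}) produces a $\tau_{cr}$-compact set contained in no set of the form $\set{G \mid \norm{b+G}\geq\beta}$; there the point $z$ cannot be separated from $\infty$, so the space you rely on is not Hausdorff and the separation argument cannot even begin. Contrary to your closing assessment, it is the finite-cover step, not the absorption, where the countable approximate identity must do its work.

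The paper reaches the conclusion by a different route that produces a single covering set directly. By \cite[Theorem 2.6]{L}, $\sigma$-unitality gives $\tau_q=\tau_{cr}$, and since $a$ is strictly positive the compact sets $K_n := q_A(\set{P\in\Pm(A) \mid \norm{a+P}\geq 1/n})$ form an increasing exhaustion of $\mathrm{Glimm}(A)$ with the property that a subset $F$ is $\tau_q$-closed as soon as every $F\cap K_n$ is closed. If a compact $K$ were contained in no $K_n$, one could pick $G_n\in K\setminus K_n$ and every subset of $\set{G_n \mid n\in\mathbb{N}}$ would meet each $K_m$ in a finite set, making $\set{G_n}$ an infinite closed discrete subset of the compact set $K$ --- a contradiction. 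If you could prove independently that under $\sigma$-unitality every point of $\mathrm{Glimm}(A)$ has a $\tau_{cr}$-neighbourhood contained in some $\set{G \mid \norm{a+G}\geq\beta}$, your compactification scheme would become viable, but that local statement is no easier than the theorem and you have not argued it.
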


\begin{proof}

   The existence of a countable approximate identity implies $\tau_q = \tau_{cr}$ by \cite[Theorem 2.6]{L} so we shall work with $\tau_q$ in what
   follows.

   For every natural number $n$ let $O_n := \set{P\in \Pm(A) \mid \norm{a + P} > 1/n}$, $C_n := \set{P\in \Pm(A) \mid \norm {a + P} \geq 1/n}$ and
   $K_n := q_A(C_n)$ where $q_A : \Pm(A)\to \mathrm{Glimm}(A)$ is the quotient map. Then $O_n$ is open in $\Pm(A)$ and $C_n$ is compact by
   \cite[Propositions 3.3.2 and 3.3.7]{D}. Thus $K_n$ is a compact hence closed subset of the Hausdorff space $\mathrm{Glimm}(A)$. We have $\Pm(A)
   = \cup_{n=1}^{\infty} O_n = \cup_{n=1}^{\infty} C_n$ since $a$ is strictly positive, $\mathrm{Glimm}(A) = \cup_{n=1}^{\infty} K_n$, and
   $K_n\subset K_{n+1}$ for every $n$.

   A subset $F$ of $\mathrm{Glimm}(A)$ is closed if and only if each $F\cap K_n$ is closed. It is clear that if $F\subset \mathrm{Glimm}(A)$ is
   closed than $F\cap K_n$ is closed. Suppose now that $F\cap K_n$ is closed for every $n$. We want to show that $q_A^{-1}(F)$ is closed. To this
   end, let $P_0\in \overline{q_A^{-1}(F)}$. Then $P_0\in O_m$ and $q_A(P_0)\in K_m$ for some $m$. Let $U$ be an arbitrary open neighbourhood of
   $q_A(P_0)$ in $\mathrm{Glimm}(A)$. Thus $q_A^{-1}(U)\cap O_m$ is an open neighbourhood of $P_0$ hence there exists $P\in q_A^{-1}(F)\cap
   q_A^{-1}(U)\cap O_m$. We get $q_A(P)\in F\cap U\cap K_m$. We showed that $U\cap F\cap K_m\neq \mset$ and we can conclude that $q_A(P_0)\in
   \overline{F\cap K_m} = F\cap K_m$. But then $P_0\in q_A^{-1}(F\cap K_m)\subset q_A^{-1}(F)$. It follows that $\overline{q_A^{-1}(F)} =
   q_A^{-1}(F)$.

   Now we claim that if $K\subset \mathrm{Glimm}(A)$ is $\tau_q$-compact then there exists $m$ such that $K\subset K_m = \set{G\in \mathrm{Glimm}(A)
   \mid \norm{a + G}\geq 1/m}$. If not then there exists $G_n\in K\setminus K_n$ for every $n$. We shall show that the infinite set $F := \set{G_n
   \mid n\in \mathbb{N}}$ is both closed and discrete. Being a subset of the compact set $K$ this a contradiction and by this our claim will be
   established. Let $F_1$ be any subset of $F$. Since the sequence $\set{K_n}$ is non-decreasing, $F_1\cap K_n$ is finite for every $n$. By the
   previous paragraph $F_1$ is closed and we are done.

\end{proof}

For a quasicentral $C^*$-algebra with a countable approximate identity one can improve a little the strengthened version of \cite[Theorem
2.5]{Da} outlined in \cite[p. 351]{AS}.

\begin{cor}

   Let $A$ be a quasicentral $C^*$-algebra with an approximate identity. Then its center, $Z(A)$, contains a strictly positive element $z$ of $A$ so
   for every $\tau_{cr}$-compact subset $K$ of $\mathrm{Glimm}(A)$ there exists $\alpha > 0$ such that
   \[
   K\subset \set{G\in \mathrm{Glimm}(A) \mid \norm{z + G}\geq \alpha}.
   \]

\end{cor}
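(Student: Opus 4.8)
The plan is to reduce the statement to Theorem~\ref{T:T} by manufacturing a single strictly positive element of $A$ that happens to lie in the center $Z(A)$; once such a $z$ is in hand, the covering assertion for every $\tau_{cr}$-compact $K$ is nothing but Theorem~\ref{T:T} read off for $z$. The key input I would take as known is the standard characterization that a $C^*$-algebra is quasicentral precisely when its center contains an approximate identity for the whole algebra (this is exactly what underlies the strengthening of \cite[Theorem 2.5]{Da} recorded in \cite[p.~351]{AS}). Accordingly I would fix an approximate identity $(e_\lambda)_{\lambda\in\Lambda}$ of $A$ with each $e_\lambda\in Z(A)$ and $0\le e_\lambda\le 1$, and, since $A$ is $\sigma$-unital, also fix a strictly positive element $a\in A$.

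The first step is to replace the net $(e_\lambda)$ by a \emph{sequence} that is still central and still an approximate identity. Because $e_\lambda a\to a$ in norm, I would choose indices $\lambda_1,\lambda_2,\dots$ with $\norm{e_{\lambda_n}a-a}<1/n$ and set $z_n:=e_{\lambda_n}\in Z(A)$. To check that $(z_n)$ remains an approximate identity, observe that strict positivity of $a$ gives $\overline{aAa}=A$, so elements $aya$ with $y\in A$ are dense; for such an element $\norm{z_n(aya)-aya}\le \norm{z_n a-a}\,\norm{ya}\to 0$, and since $\norm{z_n}\le 1$ this forces $z_n x\to x$ for every $x\in A$ (centrality then gives convergence on both sides). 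The second step is to form $z:=\sum_{n=1}^{\infty}2^{-n}z_n$. The series converges in norm, so $z$ is a positive element of the closed subalgebra $Z(A)$, and it is strictly positive in $A$ by the usual device: for any state $\phi$ of $A$ one has $\phi(z_n)\to 1$, whence $\phi(z)=\sum_n 2^{-n}\phi(z_n)>0$, and a positive element on which every state is strictly positive is strictly positive. Applying Theorem~\ref{T:T} to this $z$ then yields, for each $\tau_{cr}$-compact $K$, an $\alpha>0$ with $K\subset\set{G\in\mathrm{Glimm}(A)\mid \norm{z+G}\ge\alpha}$.

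The genuine obstacle is the passage into the center, not the subsequent arithmetic: everything hinges on the equivalence between quasicentrality and the existence of an approximate identity inside $Z(A)$, and on the fact that extracting a subsequence keeps us inside the closed set $Z(A)$ while the strict positivity of $a$ guarantees the extracted sequence is still an approximate identity. The extraction itself and the $\sum 2^{-n}z_n$ construction are routine, and I would expect the write-up to spend most of its effort on justifying that the chosen central sequence really remains an approximate identity for $A$.
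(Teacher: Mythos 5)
Your argument is correct, and it reaches the same endpoint as the paper --- a single element $z\in Z(A)$ that is strictly positive for all of $A$, to which Theorem~\ref{T:T} is then applied --- but the route you take to manufacture $z$ is genuinely different. The paper invokes the homeomorphism between $\mathrm{Glimm}(A)$ and the maximal ideal space of $Z(A)$ recorded in \cite[p.~351]{AS}; since $\mathrm{Glimm}(A)=\bigcup_n K_n$ is $\sigma$-compact (this is already in the proof of Theorem~\ref{T:T}), the commutative algebra $Z(A)\cong \mathcal{C}_0(\mathrm{Max}(Z(A)))$ has a strictly positive element $z$ \emph{for $Z(A)$}, and quasicentrality is then used a second time in the form ``no positive linear functional of $A$ vanishes on $Z(A)$'' to upgrade $z$ to a strictly positive element of $A$. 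You instead take as input the equivalent formulation of quasicentrality via a central approximate identity (which, as you note, is essentially the same circle of facts from \cite{AS}; it amounts to $\overline{Z(A)A}=A$), use the strictly positive element $a$ of $A$ to extract a central \emph{sequential} approximate identity, and sum it with weights $2^{-n}$. Your analytic details check out: $\phi(z_n)\to\norm{\phi}$ holds for any approximate identity bounded by $1$, not only increasing ones, and the density of $\set{aya}$ together with $\norm{z_n}\le 1$ does give $z_nx\to x$ for all $x$. What the paper's route buys is brevity --- two sentences once the topological identification is cited --- and it localizes the use of $\sigma$-unitality entirely in the $\sigma$-compactness of $\mathrm{Glimm}(A)$ already established. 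What your route buys is a more self-contained, purely algebraic/analytic construction that never mentions $\mathrm{Max}(Z(A))$, at the cost of a longer verification and of importing the central-approximate-identity characterization as a black box.
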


\begin{proof}

   As observed in \cite[p. 351]{AS}, $\mathrm{Glimm}(A)$ is homeomorphic to the maximal ideal space of $Z(A)$. Thus the latter space is
   $\sigma$-compact hence $Z(A)$ contains a strictly positive element $z$ for $Z(A)$. But no positive linear functional of $A$ can be trivial on
   $Z(A)$ since $A$ is quasicentral. Thus $z$ is strictly positive for $A$ and the conclusion follows from Theorem \ref{T:T}.

\end{proof}

It is possible to describe precisely when a $\tau_{cr}$-compact subset of the Glimm space of a $C^*$-algebra can be covered by a compact set
determined by a norm inequality: it must be the image by the quotient map of a compact subset of the primitive ideal space.

\begin{prop} \label{P:cover}

   Let $A$ be a $C^*$-algebra and $K$ a $\tau_{cr}$-compact subset of $\mathrm{Glimm}(A)$. There are $a\in A$ and $\alpha > 0$ such that $K\subset
   \set{G\in \mathrm{Glimm}(A) \mid \norm{a + G}\geq \alpha}$ if and only if there exists a compact subset $C\subset \Pm(A)$ such that $q_A(C) = K$.
   A $\tau_{cr}$-compact that satisfies the above conditions is also $\tau_q$-compact.

\end{prop}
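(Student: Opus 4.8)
The plan is to treat the two implications of the equivalence separately and then read off the final assertion from the second construction. Throughout I will use that the quotient map $q_A : \Pm(A) \to (\mathrm{Glimm}(A),\tau_q)$ is continuous, and hence, since $\tau_q \geq \tau_{cr}$, continuous as well as a map into $(\mathrm{Glimm}(A),\tau_{cr})$. Because $(\mathrm{Glimm}(A),\tau_{cr})$ is Hausdorff, the compact set $K$ is $\tau_{cr}$-closed, so $q_A^{-1}(K)$ is a closed subset of $\Pm(A)$; this fact is what the first implication rests on.

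For the implication from a norm inequality to a compact preimage, suppose $K \subset \set{G \mid \norm{a+G}\geq \alpha}$ with $a\in A$ and $\alpha>0$. The set $C_0 := \set{P\in \Pm(A) \mid \norm{a+P}\geq \alpha}$ is compact by \cite[Proposition 3.3.7]{D}, and by the displayed equalities of the Introduction $q_A(C_0) = \set{G \mid \norm{a+G}\geq \alpha} \supseteq K$. I would then simply cut $C_0$ down: put $C := C_0 \cap q_A^{-1}(K)$, a closed subset of the compact set $C_0$, hence compact. That $q_A(C)=K$ is immediate, since $C\subseteq q_A^{-1}(K)$ forces $q_A(C)\subseteq K$, while every $G\in K\subseteq q_A(C_0)$ has a preimage $P\in C_0$ that automatically lies in $q_A^{-1}(K)$, whence $P\in C$ and $G\in q_A(C)$.

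The reverse implication is where the real work lies. Given a compact $C\subseteq \Pm(A)$ with $q_A(C)=K$, I want to manufacture a single positive element. The sets $\set{P \mid \norm{y+P}>\beta}$ with $y\in A^+$ and $\beta>0$ form an open cover of $\Pm(A)$ (each proper primitive ideal omits some positive $y$, as the positive cone generates $A$), so compactness of $C$ yields finitely many $y_1,\dots,y_n\in A^+$ and $\beta_1,\dots,\beta_n>0$ with $C\subseteq \bigcup_i \set{P \mid \norm{y_i+P}>\beta_i}$. The key step, and the one I expect to be the main obstacle, is collapsing this finite union into one norm inequality. Setting $a := \sum_i y_i\in A^+$ and $\beta := \min_i \beta_i$, I would exploit that each quotient $A\to A/P$ is an order-preserving $*$-homomorphism: from $a\geq y_i\geq 0$ one gets $\norm{a+P}\geq \norm{y_i+P}$ for every $P$, so $\set{P \mid \norm{y_i+P}>\beta_i}\subseteq \set{P \mid \norm{a+P}>\beta}$ for each $i$. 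Hence $C\subseteq \set{P \mid \norm{a+P}\geq \beta}$, and applying $q_A$ together with the displayed equalities gives $K=q_A(C)\subseteq \set{G \mid \norm{a+G}\geq \beta}$, as required.

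Finally, the last sentence needs no separate argument: under the equivalent conditions $K=q_A(C)$ for some compact $C\subseteq \Pm(A)$, and since $q_A$ is continuous into $(\mathrm{Glimm}(A),\tau_q)$, its image $K$ is $\tau_q$-compact. The only points demanding genuine care are the verification that the positive-element sets really cover $\Pm(A)$ and the order inequality $\norm{a+P}\geq \norm{y_i+P}$; both are standard $C^*$-facts, so I anticipate no technical surprises beyond the combining step in the third paragraph.
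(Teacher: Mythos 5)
Your proof is correct and follows essentially the same route as the paper's: the same open cover of $C$ by sets $\set{P \mid \norm{y_i+P}>\beta_i}$ coming from positive elements, the same combination $a=\sum_i y_i$ with $\beta=\min_i\beta_i$, and the same construction $C=q_A^{-1}(K)\cap\set{P\mid \norm{a+P}\geq\alpha}$ for the converse. The only difference is cosmetic: you make explicit the order inequality $\norm{a+P}\geq\norm{y_i+P}$ that the paper leaves implicit in its chain of inclusions.
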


\begin{proof}

   Suppose the compact subset $C$ of $\Pm(A)$ satisfies $q_A(C) = K$. For every $P_0\in C$ let $x_0$ be a positive element in $A\setminus P_0$.
   Then $P_0\in \set{P\in \Pm(A) \mid \norm{x_0 + P} > \norm{x_0 + P_0}/2}$ and we get in this way an open cover of $C$. By compactness we get
   positive elements $\set{x_i}_{i=1}^n$ of $A$ and positive scalars $\set{\alpha_i}_{i=1}^n$ such that $C\subset \cup_{i=1}^n \set{P\in \Pm(A)
   \mid \norm{x_i + P} > \alpha_i}$. With $x := \sum_{i=1}^n x_i$ and $\alpha := \min \set{\alpha_i \mid 1\leq i\leq n}$ we have
   \[
    C\subset \cup_{i=1}^n \set{P\in \Pm(A) \mid \norm{x_i + P} > \alpha_i}\subset \set{P\in \Pm(A) \mid \norm{x + P} > \alpha}.
   \]
   Then $K = q_A(C)\subset q_A(\set{P\in \Pm(A) \mid \norm{x + P} > \alpha}) = \set{G\in \mathrm{Glimm}(A) \mid \norm{x + G} > \alpha}$.
   Thus $K\subset \set{G\in \mathrm{Glimm}(A) \mid \norm{x + G}\geq \alpha}$

   Suppose now that $K\subset \set{G\in \mathrm{Glimm}(A) \mid \norm{x + G}\geq \alpha}$ for some $x\in A$ and $\alpha > o$. Then $C :=
   q_A^{-1}(K)\cap \set{P\in \Pm(A) \mid \norm{x + P}\geq \alpha}$ is compact being a relatively closed subset of the compact set $\set{P\in
   \Pm(A) \mid \norm{x + P}\geq \alpha}$ and $q_A(C) = K$ since $q_a(\set{P\in \Pm(A) \mid \norm{x + P}\geq \alpha}) = \set{G\in
   \mathrm{Glimm}(A) \mid \norm{x + G}\geq \alpha}$. Obviously, $K = q_A(C)$ is $\tau_q$-compact.

\end{proof}

To obtain another situation in which the conclusion of \cite[Theorem 2.5]{Da} holds we need the following lemma.

\begin{lem} \label{L:open}

   Let the Hausdorff space $Y$ be a quotient of the locally compact space $X$. If the quotient map, $q$, is open then for every compact subset
   $K$ of $Y$ there exists a compact subset $C$ of $X$ such that $q(C) = K$.

\end{lem}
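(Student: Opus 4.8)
The plan is to build $C$ as a closed subset of a finite union of compact neighbourhoods, carved out so that its image is exactly $K$. First I would exploit local compactness of $X$: for each $y\in K$ choose a point $x\in q^{-1}(y)$, and since $X$ is locally compact pick an open neighbourhood $U_x$ of $x$ whose closure $\overline{U_x}$ is compact. Because $q$ is open, each $q(U_x)$ is an open neighbourhood of $y$ in $Y$, so the family $\set{q(U_x)}$ is an open cover of $K$.

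Next I would invoke compactness of $K$ to extract a finite subcover $K\subset \cup_{i=1}^n q(U_{x_i})$, and set $D := \cup_{i=1}^n \overline{U_{x_i}}$, a finite union of compact sets and hence compact. At this stage $q(D)\supset K$, but in general $q(D)$ will be strictly larger than $K$, so $D$ itself does not serve as the desired $C$.

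The crucial step is to trim $D$ down to the correct image. Here I would use that $K$ is closed in $Y$ — which holds precisely because $Y$ is Hausdorff and $K$ is compact — so that $q^{-1}(K)$ is closed in $X$ by continuity of $q$. Define $C := D\cap q^{-1}(K)$. Being a relatively closed subset of the compact set $D$, the set $C$ is compact, and by construction $q(C)\subset K$. For the reverse inclusion, given $y\in K$ pick $i$ with $y\in q(U_{x_i})$, and then $x\in U_{x_i}$ with $q(x)=y$; this $x$ lies in $\overline{U_{x_i}}\subset D$ and in $q^{-1}(K)$, hence in $C$, giving $y\in q(C)$. Thus $q(C)=K$.

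The only delicate point is securing the equality $q(C)=K$ rather than mere containment, and the resolution is the intersection with $q^{-1}(K)$; everything else is a routine combination of local compactness, openness of $q$, and compactness of $K$. The openness hypothesis enters exactly once — to guarantee that the images $q(U_x)$ are open and so form an honest open cover of $K$ — but it is indispensable, since without it one cannot pass from neighbourhoods upstairs to neighbourhoods downstairs.
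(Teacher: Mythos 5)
Your argument has the same architecture as the paper's: cover $K$ by the (open, since $q$ is open) images of neighbourhoods of points of $q^{-1}(K)$ with compact ``hulls'', extract a finite subcover, let $D$ be the corresponding finite union of compact sets, and put $C := D\cap q^{-1}(K)$, which is compact as a relatively closed subset of $D$ (using that $K$ is closed in the Hausdorff space $Y$) and has image exactly $K$. Your verification that $q(C)=K$ is the same as, and slightly more explicit than, the paper's.

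There is, however, one step that does not survive in the generality in which the lemma is actually used. You invoke local compactness of $X$ to produce, for each chosen $x$, an open neighbourhood $U_x$ with $\overline{U_x}$ compact. That form of local compactness is equivalent to the standard one (``every point has a compact neighbourhood'') only when $X$ is Hausdorff --- and the statement assumes that $Y$ is Hausdorff, not $X$. In the intended application $X=\Pm(A)$, which is typically non-Hausdorff, and there a point $P$ with $\overline{\set{P}}$ non-compact has no open neighbourhood with compact closure at all: indeed $\overline{\set{P}}\subset\overline{U}$ for every neighbourhood $U$ of $P$, and a closed subset of a compact set is compact. The repair is immediate and is exactly what the paper does: take a compact set $E_x$ with $x\in\mathrm{int}(E_x)$, use the sets $q(\mathrm{int}(E_x))$ as the open cover of $K$, and let $D$ be the union of finitely many of the $E_{x_i}$. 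Nothing else in your argument needs to change.
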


\begin{proof}

   For each $x\in q^{-1}(K)$ let $E_x$ be a compact subset of $X$ such that $x\in \mathrm{int}(E_x)$. Then
   \[
    K\subset \cup_{x\in q^{-1}(K)} q(\mathrm{int}(E_x)).
   \]
   By passing to a finite subcover of this open cover of $K$ we get $\set{x_i}_{i=1}^n\subset q^{-1}(K)$ such that $K\subset \cup_{i=1}^n
   q(\mathrm{int}(E_{x_i}))\subset \cup_{i=1}^n q(E_{x_i})$. Now $D := \cup_{i=1}^n E_{x_I}$ is a compact subset of $X$ and $C := q^{-1}(K)\cap D$
   is a relatively closed subset of it hence compact too. Clearly $q(C) = K$.

\end{proof}

If $q_A$ is $\tau_q$-open then $\tau_q = \tau_{cr}$ by \cite[p. 351]{AS}; if $q_A$ is $\tau_{cr}$-open then obviously it is also $\tau_q$-open.
The following proposition follows immediately from Proposition \ref{P:cover} and Lemma \ref{L:open}.

\begin{prop} \label{P:open}

   Let $A$ be a $C^*$-algebra for which the quotient map $q_A ; \Pm(A)\to \mathrm{Glimm}(A)$ is open. Then for every $\tau_{cr}$-compact
   $K\subset \mathrm{Glimm}(A)$ there exist $a\in A$ and $\alpha > 0$ such that $K\subset \set{G\in \mathrm{Glimm}(A) \mid \norm{a + G}\geq \alpha}$.

\end{prop}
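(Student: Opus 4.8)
The plan is to deduce the statement immediately from Proposition \ref{P:cover} and Lemma \ref{L:open}, exactly as the preceding remark announces. The strategy is a two-step reduction: given a $\tau_{cr}$-compact set $K$, first produce a compact preimage $C\subset \Pm(A)$ with $q_A(C) = K$ by invoking Lemma \ref{L:open}, and then feed this $C$ into Proposition \ref{P:cover} to extract the required element $a\in A$ and scalar $\alpha > 0$.

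First I would dispose of the topological bookkeeping, which is the only point that needs care. The openness hypothesis must be read relative to a fixed topology on $\mathrm{Glimm}(A)$, and the remark immediately preceding the statement removes any ambiguity: whether $q_A$ is $\tau_q$-open or $\tau_{cr}$-open, in both cases $\tau_q = \tau_{cr}$ by \cite[p. 351]{AS}. Hence the two topologies coincide, $q_A$ is open for this common topology, and $(\mathrm{Glimm}(A),\tau_{cr})$ is a Hausdorff quotient of $\Pm(A)$; moreover a $\tau_{cr}$-compact $K$ is then automatically $\tau_q$-compact. This is precisely the setup demanded by Lemma \ref{L:open}, once one records the standard fact that $\Pm(A)$ is locally compact (see \cite{D}).

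With these hypotheses verified, I would apply Lemma \ref{L:open} with $X = \Pm(A)$, $Y = \mathrm{Glimm}(A)$, and $q = q_A$, obtaining a compact set $C\subset \Pm(A)$ with $q_A(C) = K$. Then Proposition \ref{P:cover} converts the existence of such a compact $C$ directly into the existence of $a\in A$ and $\alpha > 0$ with $K\subset \set{G\in \mathrm{Glimm}(A) \mid \norm{a + G}\geq \alpha}$, which is the assertion. There is no serious obstacle here: the substance of the argument lives entirely in Proposition \ref{P:cover} and Lemma \ref{L:open}, so the ``hard part'' reduces to the topology reconciliation of the first step, after which the conclusion follows by citation alone with no further computation.
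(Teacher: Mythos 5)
Your proof is correct and follows exactly the route the paper intends: the paper gives no separate argument for this proposition, stating only that it ``follows immediately'' from Proposition \ref{P:cover} and Lemma \ref{L:open} after the same remark reconciling $\tau_q$-openness with $\tau_{cr}$-openness via \cite[p. 351]{AS}. Your write-up merely makes explicit the verification of the hypotheses of Lemma \ref{L:open} (Hausdorffness of the Glimm space and local compactness of $\Pm(A)$), which is a faithful expansion of the paper's one-line justification.
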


Remark that by \cite[Theorem 3.3]{AS} Proposition \ref{P:open} applies to the class of quasi-standard $C^*$-algebras.

In view of the above results it is natural to ask if the conclusion of \cite[Theorem 2.5]{Da} always holds when $A$ ia a $C^*$-algebra for which
$\tau_{cr} = \tau_q$ on $\mathrm{Glimm}(A)$.

\section{examples}

We know now three classes of $C^*$-algebras for which the $\tau_{cr}$-compact subsets of their Glimm spaces satisfy the conclusion of
\cite[Theorem 2.5]{Da}: the quasicentral $C^*$-algebras, the $C^*$-algebras that have a countable approximate identity, and those for which the
quotient map of the primitive ideal space onto the Glimm space is open. In this section we shall show that none of these classes contains one of
the other.

In the following $\omega$ will denote the first infinite ordinal and $\Omega$ will stand for the first uncountable ordinal. All the spaces of
ordinals will be considered with their order topology.

First we shall give an example of a quasicentral algebra $A$ that does not have a countable approximate identity and for which the map $q_A$ is
not open which will be based upon \cite[Example 4.8]{A}. The idea to use it for the stated purpose comes from \cite[p. 352]{AS}.

\begin{ex}

   Let $H$ be an infinite dimensional Hilbert space, $\mathcal{L}(H)$ the algebra of the bounded operators on $H$, and $\mathcal{K}(H)$ its ideal
   of compact operators. We denote by $A_1$ the unital $C^*$-algebra of all the sequences $x = \set{x_n}_{n=1}^{\infty}$ with $x_n\in
   \mathcal{L}(H)$ such that $\lim_{n\to \infty} \norm{x_{2n} - \lambda(x)\mathbf{1}_H} = 0$ for some scalar $\lambda(x)$ and $\lim_{n\to \infty}
   \norm{x_{2n+1} - (\lambda(x)\mathbf{1}_H + c(x))} = 0$ for some compact operator $c(x)$. The primitive ideals of $A_1$ are $P_n := \set{x\in A_1
   \mid x_n = 0}$, $Q_n := \set{x\in A_1 \mid x_n\in \mathcal{K}(H)}$, $J := \set{x\in A_1 \mid \lambda(x) = 0}$, and $P_{\infty} := \set{x\in A_1
   \mid \lambda(x) = 0, c(x) = 0}$. We have $\overline{\set{P_n}} = \set{P_n, Q_n}$, $\overline{\set{P_{\infty}}} = \set{P_{\infty} , J}$ in
   $\Pm(A_1)$ and $ Z_n := \set{P_n, Q_n}$, $Z_{\infty} := \set{P_{\infty}, J}$ are equivalence classes for the relation $\approx$. The subset
   $$
    \set{P_{2n} \mid n\in \mathbb{N}}\cup \set{Q_n \mid n\in \mathbb{N}}\cup \set{J}
   $$
   is closed in $\Pm(A_1)$ hence its complement $E : \set{P_{2n+1} \mid n\in \mathbb{N}}\cup \set{P_{\infty}}$ is open. $\mathrm{Glimm}(A_1)$
   consists of the converging sequence $\set{Z_n}_{n\geq 1}$ together with its limit $Z_{\infty}$. The image of $E$ by $q_{A_1}$ is $F :=
   \set{Z_{2n+1} \mid n\in \mathbb{N}}\cup Z_{\infty}$. Let now $A := \mathcal{C}_0([0,\Omega),A_1)$. Then $\Pm(A)$ can be identified with
   $[0,\Omega)\times \Pm(A_1)$, $\mathrm{Glimm}(A) = [0,\Omega)\times \mathrm{Glimm}(A_1)$. The open subset $[0,\Omega)\times E$ of $\Pm(A)$ is
   mapped by the quotient map $q_A$ onto $[0,\Omega)\times F$ which is not open in $\mathrm{Glimm}(A)$. The non $\sigma$-compact space $[0,\Omega)$
   is homeomorphic with the closed subset $[0,\Omega)\times \set{Q_1}$ of $\Pm(A)$. Thus $\Pm(A)$ is not $\sigma$-compact and $A$ does not have a
   countable approximate identity. For each ordinal $\alpha < \Omega$ define $f_{\alpha}\in A$ by $f_{\alpha}(\beta) = \mathbf{1}_{A_1}$ for $0\leq
   \beta\leq \alpha$ and $f_{\alpha}(\beta) = 0$ otherwise. We have $f_{\alpha}\in Z(A)$ for every $\alpha$ and no primitive ideal of $A$ contains
   all the functions $f_{\alpha}$. Thus $A$ is a quasicentral $C^*$-algebra.

\end{ex}

\begin{ex}

   We shall exhibit now a $\sigma$-unital non quasicentral $C^*$-algebra $A$ for which the map $q_A$ is not open.
   Let now $H$ be an infinite dimensional separable Hilbert space, $\mathcal{K}(H)$ the algebra of all compact operators on $H$ and $D$ the
   subalgebra of $\mathcal{K}(H)$ consisting of all the diagonal operators with respect to a fixed orthonormal basis of $H$. In \cite[Example
   9.2]{DH} one considers the $C^*$-subalgebra $A$ of $\mathcal{C}([-1,1],\mathcal{K}(H))$ of all the functions $f$ for which $f(t)\in D$, $f(t) =
   \mathrm{diag}(f_{ii}(t))$ say, whenever $0\leq t\leq 1$. Being a separable $C^*$-algebra $A$ has a countable approximate identity. The center of
   $A$ is $\set{f\in A \mid f(t) = 0 \; \mathrm{for} -1\leq t < 0}$. Obviously it is contained in the primitive ideal $P := \set{f\in A \mid f(-1)
   = 0}$ thus $A$ is not quasicentral. The Glimm space of $A$ is not $\tau_{cr}$-compact, see \cite[p. 127]{DH}, so it follows from \cite[Theorem
   2.1]{AS} that $q_A$ is not open.

\end{ex}

\begin{ex}

   Now we are going to discuss an example of a non $\sigma$-unital non quasicentral $C^*$-algebra $A$ for which $q_A$ is open. As above let
   $\mathcal{K}(H)$ be the $C^*$-algebra of all the compact operators on an infinite dimensional Hilbert space. Then $A :=
   \mathcal{C}_0([0,\Omega), \mathcal{K}(H))$ is not $\sigma$-unital since $\Pm(A)$ is homeomorphic to $[0,\O
   mega)$ and this space is not
   $\sigma$-compact. The center of $A$ is trivial thus $A$ cannot be quasicentral. Finally, $q_A$ is just the identity map on $\Pm(A)$.

\end{ex}

Lastly we shall show that there exist a $C^*$-algebra $A$ and a $\tau_{cr}$-compact subset K of $\mathrm{Glimm}(A)$ that is not
$\tau_q$-compact. By Proposition \ref{P:cover} for such $K$ there do not exist $a\in A$ and $\alpha > 0$ such that $K\subset \set{G\in
\mathrm{Glimm}(A) \mid \norm{a + G}\geq \alpha}$. The $C^*$-algebra which we treat is a particular case of a class of $C^*$-algebras constructed
by D. W. B. Somerset in the Appendix of \cite{L} for which $\tau_{cr}\neq \tau_q$.

\begin{ex}

   Let $Y := [0,\Omega)\times [0,\omega)$, $S := \{\Omega\}\times [0,\omega)$, and $T := [0,\Omega)\times \{\omega\}$. Let $y$ be an element not
   in $Y$ and consider the space $X := Y\cup \{y\}$ with the topology determined by the following requirements: $Y$ is embedded homeomorphically
   into $X$, $\{y\}$ is an open subset whose closure is $S\cup \{y\}$. Then $X$ is a locally compact $T_0$-space and, following \cite[p.
   155]{L}, we shall produce a $C^*$-algebra $A$ whose primitive ideal space is homeomorphic to $X$.

   For $B := \mathcal{C}_0(Y)$ and $D := \mathcal{C}_0(S)$ let $\pi_1 : B\to D$ be the restriction map. Let $\{p_n\}_{n=1}^{\infty}$ be a
   sequence of infinite dimensional mutually orthogonal projections on the infinite dimensional separable Hilbert space $H$. We define an embedding
   $\rho : D\to \mathcal{L}(H)$ by $\rho(f) := \sum_{n=1} f(\Omega,n)p_n$. Remark that $\rho(D)\cap \mathcal{K}(H) = \{0\}$. Set
   $E := \rho(D) + \mathcal{K}(H)$ and let $\pi_2 : E\to D$ be the
   natural quotient map. We denote by $A$ the pullback of $\pi_1$ and $\pi_2$ that is $A := \set{(b,e)\in B\oplus E \mid \pi_1(b) = \pi_2(e)}$.
   It is easily seen that $\Pm(A)$ is homeomorphic to $X$.

   The quotient map $q_A$ maps every point in $Y\setminus S$ to itself and $S\cup \{y\}$ ia $\approx$-class which we shall denote by $z$. Thus
   the Glimm space of $A$ can be identified with $\mathcal{G} := (Y\setminus S)\cup \{z\}$. We claim that $K := T\cup \{z\}$ is a $\tau_{cr}$-compact
   subset of $\mathcal{G}$ that it is not $\tau_q$-compact. The $\tau_{cr}$-compactness of $K$ follows immediately once we prove that every
   $\tau_{cr}$-neighbourhood of $z$ contains a set of the form $((\alpha,\Omega)\times \{\omega\})\cup \{z\}$ for some ordinal $\alpha$ since
   $[0,\alpha]\times \{\omega\}$ is clearly compact. A basic $\tau_{cr}$-neighbourhood $\mathcal{U}$ of $z$ is given by a bounded real valued continuous
   function $g$ on $\mathcal{G}$ such that $g(z) = 1$: $\mathcal{U} = \set{t\in \mathcal{G} \mid |g(t) - 1| < 1}$. The continuity of $g$ on
   $([0,\Omega)\times \{n\})\cup z$ for $0\leq n < \omega$ entails the existence of an ordinal $\alpha_n$, $0\leq \alpha_n < \Omega$, such that
   $g(\beta) = 1$ whenever $\alpha_n < \beta < \Omega$. Set $\alpha := \sup_n \alpha_n$; then $0\leq \alpha < \Omega$ and
   $g(\beta,n) = 1$ for $\alpha < \beta < \Omega$ and $0\leq n < \omega$. It follows that $g(\beta,\omega) = 1$ if $\alpha < \beta < \Omega$ thus
   $((\alpha,\Omega)\times \{\omega\})\subset \mathcal{U}$ as needed. The subset $T = q_A(T)$ of $K$ is $\tau_q$-closed since $q_A^{-1}(T) = T$
   and $T$ is closed in $X$. Therefore $K$ cannot be $\tau_q$-compact since $T$ is not compact.

\end{ex}

\bibliographystyle{amsplain}
\bibliography{}

% -----------------------------------
\end{document}